\documentclass{article}

\usepackage[english]{babel}
\usepackage[T1]{fontenc}
\usepackage[latin1]{inputenc}
\usepackage{amsmath,amssymb} 
\usepackage{amsthm}
\usepackage{amscd}                  
\usepackage[all]{xy}                
\usepackage{xspace}
\CompileMatrices

\newcommand{\eg}{e.g.,\xspace}

\newcommand{\s}{\ensuremath{\mathbf{S}}}    
\newcommand{\Z}{\ensuremath{\mathbf{Z}}}    
\newcommand{\Q}{\ensuremath{\mathbf{Q}}}    
\newcommand{\tilZ}{\ensuremath{\tilde{\Z}}} 


\newcommand{\kata}{\ensuremath{\mathcal{A}}}

\newcommand{\hp}{\ensuremath{\rightarrow}}       
\newcommand{\lhp}{\ensuremath{\longrightarrow}}  
\newcommand{\xhp}[1][{}]{\ensuremath{\xrightarrow}}
\newcommand{\xvp}[1][{}]{\ensuremath{\xleftarrow}}


\newcommand{\p}{\widehat{{}_p}}

\newcommand{\Lim}[1][{}]{\operatornamewithlimits{lim}_{\overleftarrow{#1}}}
\newcommand{\holim}[1][{}]{\operatornamewithlimits{holim}_{\overleftarrow{#1}
}}

\newcommand{\CDlim}[1][{}]{\underset{\overleftarrow{#1}}{\operatorname{lim}}}

\theoremstyle{plain}
\newtheorem{theo}[subsection]{Theorem}
\newtheorem{prop}[subsection]{Proposition}
\newtheorem{lemma}[subsection]{Lemma}
\newtheorem{cor}[subsection]{Corollary}

\theoremstyle{definition}

\newtheorem{ex}[subsection]{Example}
\newtheorem{remark}[subsection]{Remark}

\theoremstyle{remark}


\begin{document}
\title{Excision for K-theory of connective ring spectra}
\author{Bj{\o}rn Ian Dundas and Harald {\O}yen Kittang}

\maketitle

\numberwithin{equation}{section} 

\begin{abstract}
We extend Geisser and Hesselholt's result on ``bi-relative K-theory''
from discrete rings to connective ring
spectra.  That is, if $\mathcal A$ is a homotopy cartesian $n$-cube of
ring spectra (satisfying connectivity hypotheses), then the $(n+1)$-cube
induced by the
cyclotomic trace $$K(\mathcal A)\to TC(\mathcal A)$$
is homotopy cartesian after  profinite completion.  In other words, the fiber
of the profinitely completed cyclotomic trace satisfies excision.
\end{abstract}

\section{Introduction}
\label{sec:intro}
Topological K-theory is a cohomology theory.  Most importantly it satisfies
excision, so if for instance
$X$ is a CW-complex defined by gluing two subcomplexes $X^1$ and $X^2$ along
their intersection $X^{12}$, then the Mayer-Vietoris sequence
$$\dots\to K^0(X)\to K^0(X^1)\oplus K^0(X^2)\to K^0(X^{12})\to K^1(X)\to\dots$$
is exact.  In other words, the square of spectra
$$
\begin{CD}
  K(X)@>>>K(X^1)\\@VVV@VVV\\K(X^2)@>>>K(X^{12})
\end{CD}
$$
is homotopy cartesian.

This is not true in algebraic K-theory: given maps $f^2\colon A^2\to
A^{12}$ and $f^1\colon A^1\to A^{12}$ of rings, let
$A^0=A^1\times_{A^{12}}A^2$ be the pull back (corresponding in the
commutative case to $\operatorname{Spec}(A^0)$ being formed by gluing
$\operatorname{Spec}(A^1)$ and $\operatorname{Spec}(A^2)$ along
$\operatorname{Spec}(A^{12})$). Then
$$
\begin{CD}
  K(A^0)@>>>K(A^1)\\@VVV@VVK(f^1)V\\K(A^2)@>K(f^2)>>K(A^{12})
\end{CD}
$$
need not be homotopy cartesian. It is true that under surjectivity
conditions on $f^1$ and $f^2$ the Mayer-Vietoris sequence is exact in
low degrees, but this does not continue in higher degrees. See
\cite{swan} for an amusing account, for instance showing that even if
all maps in the square are surjective, there does not exist a functor
$K_3$ such that the Mayer-Vietoris sequence can be extended.

In a series of papers (\cite{geller83}, \cite{geller86},
\cite{geller89kabi}, \cite{geller89:_Kcurves}) Geller, Reid and Weibel
explored the idea that cyclic homology should be a precise measure for
the failure of excision in the algebraic K-theory of $\Q$-algebras,
and did some conjectural calculations. The problem remained open
(although it IS an exercise in \cite{Loday}), until Corti\~nas
released a preprint \cite{cortinas06:_k} claiming the conjecture using
Suslin and Wodzicki's results on nonunital rings \cite{suswod2}.

In a recent preprint \cite{geisser06:_bi_k} Geisser and Hesselholt give the
corresponding result after profinite completion, with the difference that
cyclic homology has to be replaced by topological cyclic homology $TC$.  The
result from \cite{geisser06:_bi_k} we generalize is the following.  Let
\begin{equation*}
   \mathcal A=\left\{\begin{CD}
    A^0 @>>> A^1 \\
    @VVV  @VV{f^1}V\\
    A^2 @>>{f^2}> A^{12}
  \end{CD}\right\}
\end{equation*}
be a cartesian square of discrete rings with $f^1$ surjective, then
the cube $K(\mathcal A)\to TC(\mathcal A)$ is homotopy cartesian after
profinite completion. A word of explanation: $K(\mathcal A)$ is the
square of spectra
\begin{equation*}
   \begin{CD}
    K(A^0) @>>> K(A^1) \\
    @VVV  @VV{K(f^1)}V\\
    K(A^2) @>{K(f^2)}>> K(A^{12})
  \end{CD}
\end{equation*}
and similarly for $TC(\mathcal A)$. The cyclotomic trace $K\to TC$
then gives a map of squares.  Considering the map of squares as a
cube, the theorem states that this cube is homotopy cartesian after
profinite completion.

Another appealing formulation is that the homotopy fiber of the profinitely
completed cyclotomic trace satisfies excision.

In this paper we extend the theorem from rings to ring spectra: let $\s$ be
the sphere spectrum in any of the popular theories of spectra with strict
symmetric monoidal smash product; then we have the following result.

\begin{theo}
  \label{thm:MainIntro}
  Consider a homotopy cartesian diagram
  \kata\ of connective \s-algebras
\begin{equation*}
   \begin{CD}
    A^0 @>>> A^1 \\
    @VVV  @VV{f^1}V\\
    A^2 @>>{f^2}> A^{12}
  \end{CD}
\end{equation*}
where $f^1$ is $0$-connected. Then the
the cube
\[K(\kata)\lhp TC(\kata)\]
induced by the cyclotomic trace, is homotopy cartesian after profinite
completion.
\end{theo}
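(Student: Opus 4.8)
The plan is to recast the assertion in terms of the homotopy fiber of the cyclotomic trace and then feed it to the Dundas--Goodwillie--McCarthy theorem. Write $\mathcal F(A)=\hofib\bigl(K(A)\to TC(A)\bigr)$, regarded as a functor of connective \s-algebras, and let $\mathcal F(\kata)$ be the square obtained by applying $\mathcal F$ at each corner of \kata. Taking fibers in the trace direction first, the total fiber of the cube $K(\kata)\lvp TC(\kata)$ coincides with the total fiber of the square $\mathcal F(\kata)$. Hence the cube is homotopy cartesian after profinite completion precisely when $\mathcal F$ satisfies excision for \kata\ after completion, i.e.\ when the induced map of relative terms
\[
\hofib\bigl(\mathcal F(A^0)\to\mathcal F(A^2)\bigr)\lhp\hofib\bigl(\mathcal F(A^1)\to\mathcal F(A^{12})\bigr)
\]
becomes an equivalence after profinite completion. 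This is the statement I would prove.

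The main tool is the theorem of Dundas, Goodwillie and McCarthy that $\mathcal F$ is invariant under infinitesimal extensions: if $B\to B'$ is a map of connective \s-algebras whose effect on $\pi_0$ is a surjection with nilpotent kernel, then $\mathcal F(B)\to\mathcal F(B')$ is an equivalence. Thus the fiber of the trace already ``sees only the nilpotent direction,'' and the whole problem is to convert this infinitesimal invariance into honest excision. Following Geisser and Hesselholt, I would introduce the common ideal $I=\hofib(f^1)$, which by homotopy cartesianness is also $\hofib(A^0\to A^2)$ and which the hypothesis that $f^1$ be $0$-connected forces to behave like a genuine ideal on homotopy groups. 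Both relative terms above then measure $\mathcal F$ relative to $I$ --- the first inside $A^0$, the second inside $A^1$ --- so the desired excision is exactly the assertion that the $I$-relative fiber of the trace does not depend on the ambient \s-algebra.

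To establish this independence I would realize the bi-relative term as a relative term of a genuinely nilpotent extension, to which the Dundas--Goodwillie--McCarthy theorem applies on the nose. Concretely one builds from $I$ auxiliary connective \s-algebras --- square-zero and free (tensor-algebra) models of $I$, together with the comparison maps interpolating between $A^0$ and $A^1$ --- and checks that the resulting squares are homotopy cartesian and that the maps involved are $\pi_0$-surjective with nilpotent kernel, so that infinitesimal invariance furnishes the needed equivalences. Here profinite completion is indispensable: excision for the fiber of the trace fails integrally, since the bi-relative discrepancy carries a rational contribution of cyclic-homology type (by the theorems of Goodwillie and Corti\~nas) which no infinitesimal argument can remove and which profinite completion annihilates; this is exactly why the conclusion is only stated after completion. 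The general $n$-cube version recorded in the abstract then follows by induction, viewing a homotopy cartesian $n$-cube as a map of homotopy cartesian $(n-1)$-cubes and computing total fibers one direction at a time.

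The step I expect to be the main obstacle is the one just sketched: carrying out the Geisser--Hesselholt reduction homotopy-coherently for \s-algebras. In the discrete setting one manipulates honest ideals, quotients and square-zero extensions, whereas here each such construction must be replaced by a homotopy-invariant one --- homotopy fibers in place of kernels, free or square-zero \s-algebras on the non-unital object $I$ in place of $\Z\ltimes I$ --- and one must verify that the $0$-connectedness of $f^1$ propagates through these constructions so that the nilpotence hypothesis of the Dundas--Goodwillie--McCarthy theorem is genuinely met at each stage. Guaranteeing that these auxiliary constructions, and the convergence of the filtrations built from them, are compatible with profinite completion is where I expect the real difficulty to lie.
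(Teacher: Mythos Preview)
Your strategy --- reformulating via $\mathcal F=\hofib(K\to TC)$ and invoking infinitesimal invariance --- starts in the right place, but you then propose to \emph{re-run} the Geisser--Hesselholt argument homotopy-coherently for \s-algebras, building tensor and square-zero models on the non-unital object $I=\hofib(f^1)$. You correctly flag this as the main obstacle, and it is: that step is essentially a new proof of Geisser--Hesselholt in a harder setting. The paper avoids it entirely.

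The missing idea is that infinitesimal invariance already lets you reduce to \emph{discrete} rings. Each map $A^i\to H\pi_0A^i$ is a $\pi_0$-isomorphism, so by \cite{dundas97:_relat_k} one has $\mathcal F(A^i)\simeq\mathcal F(\pi_0A^i)$, and the whole problem becomes a question about the square $\pi_0\kata$ of ordinary rings. There Geisser--Hesselholt can be cited as a black box --- provided that square is the right one. The only wrinkle is that $\pi_0$ of a homotopy cartesian square need not be cartesian; the paper shows by an elementary simplicial-ring argument that, under the $0$-connectedness of $f^1$, the comparison map
\[
\pi_0A^0\lhp \pi_0A^1\times_{\pi_0A^{12}}\pi_0A^2
\]
is a surjection with \emph{square-zero} kernel. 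McCarthy's theorem closes that gap, and Geisser--Hesselholt applied to the honest pullback of $\pi_0$'s finishes the proof. So no homotopy-coherent ideal manipulations, no filtrations, no completion-compatibility checks are needed. (A minor point: your claim that profinite completion is ``indispensable'' because of a rational cyclic-homology obstruction overstates things --- the paper notes the result holds integrally by work in progress; completion appears here only because the cited inputs are stated that way.)
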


The proof of the theorem is delightfully simple. It follows by the
theorems of McCarthy \cite{mccarthy97}, the first author
\cite{dundas97:_relat_k}, and Geisser and Hesselholt
\cite{geisser06:_bi_k} in addition to an elementary observation about
homotopy cartesian diagrams of ring spectra.

Actually we prove stronger and more technical result in proposition
\ref{theo:strong} and then show that the conditions in theorem
\ref{thm:MainIntro} imply those of proposition \ref{theo:strong}.

We mention that theorem \ref{thm:MainIntro} holds integrally by work
in progress of the second author.

\begin{ex}
  As an example, let $k$ be a connective $\s$-algebra, and consider the
``cusp'' $C$ over $k$ gotten by the homotopy pullback
 $$ \begin{CD}
    C@>>>k[t]\\@VVV@VVV\\k@>>>k[t]/t^2
  \end{CD}
$$
(if $k$ is a discrete ring, $C\cong k[x,y]/(x^2-y^3)$, hence the name).
Letting $F$ be the profinite completion of the homotopy fiber of the
cyclotomic trace, the diagram remains homotopy cartesian after applying $F$.
However, by \cite{dundas97:_relat_k} the map $F(k)\to F(k[t]/t^2)$ is an
equivalence, and so $F(C)\to F(k[t])$ is an equivalence.
The rightmost term
may then be calculated from $Nil$-terms (if $k$ is not ``regular'' the
$Nil$-term in $K(k[t])\simeq K(k)\times Nil^K_k$ will not vanish) and
$TC(k[t])$.

Hence, one can calculate $K(C)$ if one can describe
$TC(C)$, $TC(k[t])$, $K(k)$ and the $Nil$-terms (and
all the maps connecting them).
\end{ex}

\begin{remark}
One might be tempted to believe that the crucial condition on our square of
$\s$-algebras is that it is homotopy cartesian, but unfortunately the
conclusion of the theorem is false without the surjectivity assumption on
$\pi_0f^1$.  A counterexample can be derived without calculations from the
homotopy cartesian square (the maps are the natural inclusions)
$$
\begin{CD}
  \mathbf \Z@>>>\Z[t]\\@VVV@VVV\\\Z[t^{-1}]@>>>\Z[t,t^{-1}]
\end{CD}
$$
and its sibling with the $p$-adic integers $\Z\p$ instead of $\Z$. By
the fundamental theorem of algebraic K-theory, the iterated fibers of
the two K-theory squares are $K(\Z)$ and $K(\Z\p)$ respectively. They
are very different: $K_1(\Z)\cong\Z/2$ and
$K_1(\Z\p)\cong\Z/(p-1)\times\Z\p$.  On the other hand, the
topological cyclic homology of the integral and $p$-adic square agree
after $p$-completion.

The example above has the deficiency that Milnor's theorem
\cite[section IX.5]{Bass68} does not apply: the associated square of
categories of finitely generated projective modules is not a fiber
square.  We know of no examples of squares of rings for which the
Milnor theorem applies where the conclusion of the main theorem does
not hold.

A natural conjecture would be that the fiber of the cyclotomic trace
takes fiber squares of exact categories to homotopy cartesian squares.
Beyond the obvious extensions that follow from the theorems of
Corti\~nas and Geisser-Hesselholt, the case of the projective line is
our only support for this conjecture.

There is a direct proof of the extension of Geisser and Hesselholt's
theorem to simplicial rings not using Goodwillie's conjecture
\cite{mccarthy97}, \cite{dundas97:_relat_k}.  This proof is
interesting in that it gives a hands on approach to the problem, and
conceivably a way to weaken the conditions of the theorem.  We will
not pursue those questions here.
\end{remark}

\subsection{Plan}In section \ref{sec:suff} we prove a proposition that turns
out to be stronger than the main theorem \ref{thm:MainIntro}.  We do not
require the square of $\s$-algebras to be homotopy cartesian, but rather
impose criteria on the path components.

In section \ref{sec:reduction} we
address the problem that $\pi_0$ does not send homotopy cartesian squares to
cartesian squares.
We also prove some multirelative extensions.

\subsection{Conventions}
The algebraic K-theory discussed in this paper is the nonconnective
version of algebraic K-theory as defined by Thomason \cite[section 6]{TT}
extended to connective $\s$-algebras.  Thomason's construction is
functorial, and is also performed on the cyclotomic trace (see
below). Since for a connective $\s$-algebra $A$ we have that
$K_1(A)\cong K_1(\pi_0A)$, we get little new: $K_i(A)\cong
K_i(\pi_0A)$ for all $i\leq 1$.  Likewise for $TC$.

Topological cyclic homology $TC$ is taken to be integral topological cyclic
homology as defined by Goodwillie \cite{MSRI}, but appears in this paper only
after profinite completion, and so agrees with the product over all primes
$p$ of the $p$-completion of the $p$-typical version $TC(-;p)$ appearing in
\cite{BHM}.  The cyclotomic trace is given as in \cite{dundas04}.

All displayed diagrams commute.

\section{Sufficient conditions on the path components}\label{sec:suff}

In this section we prove proposition \ref{theo:strong} (and a multirelative
version, corollary \ref{cor:strong}) that turns out to be stronger than the
main theorem \ref{thm:MainIntro}.  We do not require the square of
$\s$-algebras to be homotopy cartesian, but rather impose criteria on the
path components.
\begin{prop}
\label{theo:strong} Let \kata\ be a diagram
\begin{equation*}
  \begin{CD}
    A^0 @>>> A^1 \\
    @VVV @VVV\\
    A^2 @>>> A^{12}
  \end{CD}
\end{equation*}
of connective \s-algebras such that $\pi_0A^1 \hp
\pi_0A^{12}$ is surjective and the induced map of  rings
$$\pi_0A^0\to \pi_0A^1\times_{\pi_0A^{12}}\pi_0A^2$$
is a surjection with nilpotent kernel.
  Then the cube
$$K(\kata)\hp TC(\kata)$$
  induced by the trace map is homotopy cartesian after profinite completion.
\end{prop}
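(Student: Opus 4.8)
The plan is to reduce the excision statement for the cube $K(\kata)\to TC(\kata)$ to the already-established discrete case of Geisser and Hesselholt, using the relative theorem of McCarthy and the first author to pass from spectra down to their path components. The key idea is that the homotopy fiber $F$ of the profinitely completed cyclotomic trace is essentially insensitive to the difference between a connective $\s$-algebra $A$ and its zeroth Postnikov section $\pi_0A$. Concretely, McCarthy's theorem asserts that for a nilpotent extension of simplicial rings the relative $K$-theory and relative $TC$ agree, and Dundas's relative theorem \cite{dundas97:_relat_k} upgrades this to connective $\s$-algebras: the fiber $F$ sends $1$-connected maps of connective $\s$-algebras to equivalences. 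In particular, for any connective $\s$-algebra $A$ the natural map $A\to\pi_0A$ (a $1$-connected map) induces an equivalence $F(A)\we F(\pi_0A)$.

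First I would apply this equivalence to each of the four corners of \kata, obtaining that the cube $F(\kata)$ is equivalent to the cube $F$ applied to the square of discrete rings
\begin{equation*}
  \begin{CD}
    \pi_0A^0 @>>> \pi_0A^1 \\
    @VVV @VVV\\
    \pi_0A^2 @>>> \pi_0A^{12}.
  \end{CD}
\end{equation*}
Thus the whole problem descends to showing that this square of path-component rings, after applying $F$, is homotopy cartesian. Note that homotopy cartesianness of the original cube $K(\kata)\to TC(\kata)$ is exactly the statement that $F(\kata)$ is homotopy cartesian as a square, since the iterated homotopy fiber of the cube is the iterated homotopy fiber of $F$ applied to the square. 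So it suffices to verify excision for $F$ on the discrete square $\pi_0\kata$.

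Now I would invoke the hypotheses of the proposition. We are told that $\pi_0A^1\to\pi_0A^{12}$ is surjective and that the comparison map $\pi_0A^0\to\pi_0A^1\times_{\pi_0A^{12}}\pi_0A^2$ is surjective with nilpotent kernel. Let $P=\pi_0A^1\times_{\pi_0A^{12}}\pi_0A^2$ denote the genuine pullback ring. The surjectivity of $\pi_0A^1\to\pi_0A^{12}$ means the square with $P$ in the corner is a cartesian square of discrete rings with one leg surjective, so the Geisser--Hesselholt theorem \cite{geisser06:_bi_k} applies directly to it and shows $F$ takes it to a homotopy cartesian square. It remains to compare the square with corner $\pi_0A^0$ to the square with corner $P$: these differ only in the corner term, where $\pi_0A^0\to P$ is surjective with nilpotent kernel. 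Since $F$ is invariant under nilpotent extensions of discrete rings (again by McCarthy's theorem, now in its original discrete formulation, the relative term of the cyclotomic trace vanishing on nilpotent ideals after completion), the map $F(\pi_0A^0)\we F(P)$ is an equivalence. Replacing the corner does not affect homotopy cartesianness, so the square $\pi_0\kata$ is sent by $F$ to a homotopy cartesian square, and tracing back up the chain of equivalences gives the result for $\kata$.

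The main obstacle I anticipate is the bookkeeping around which precise invariance statement is available at each level: one must be careful that the relative theorem being applied to the $\s$-algebra maps $A\to\pi_0A$ is the connective-spectra version (McCarthy--Dundas), whereas the nilpotent-kernel reduction $\pi_0A^0\to P$ is genuinely a statement about discrete rings that already sits inside the Geisser--Hesselholt framework. The cleanest route is to phrase everything in terms of $F$ and verify that each comparison map is either $1$-connected (so handled by \cite{dundas97:_relat_k}) or a surjection with nilpotent kernel (so handled by \cite{mccarthy97}); once these are lined up, the conclusion is formal from the fact that a map of squares which is an equivalence on corners preserves homotopy cartesianness.
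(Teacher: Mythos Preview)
Your proposal is correct and follows essentially the same route as the paper: define $F$ as the profinite completion of the fiber of the cyclotomic trace, use \cite{dundas97:_relat_k} to replace each $A^i$ by $\pi_0A^i$, use McCarthy \cite{mccarthy97} on the nilpotent surjection $\pi_0A^0\to P$, and use Geisser--Hesselholt \cite{geisser06:_bi_k} on the genuine pullback square with corner $P$. The only difference is the order in which you invoke these three ingredients, which is immaterial.
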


\begin{proof}
Let $F$ be the profinite completion of the homotopy fiber of the cyclotomic
trace $K\to TC$.
Since $\pi_0A^1 \hp
\pi_0A^{12}$ is surjective, Geisser and Hesselholt's theorem implies that the
square
$$
\begin{CD}
  F(\pi_0A^1\times_{\pi_0A^{12}}\pi_0A^2)@>>>F(\pi_0A^1)\\
@VVV@VVV\\
F(\pi_0A^2)@>>>F(\pi_0A^{12})
\end{CD}
$$
is homotopy cartesian.
The assumption that $\pi_0A^0\to \pi_0A^1\times_{\pi_0A^{12}}\pi_0A^2$
is a surjection with nilpotent kernel, opens for the use of McCarthy's theorem
\cite{mccarthy97} and we may conclude that
$$
F(\pi_0A^0)\to F(\pi_0A^1\times_{\pi_0A^{12}}\pi_0A^2)
$$
is an equivalence. Hence the square $F(\pi_0\mathcal A)$ is homotopy
cartesian.

Now, by \cite{dundas97:_relat_k}, each of the vertical maps in the cube
$$
\begin{CD}
  F(\mathcal A)\\@VVV\\F(\pi_0\mathcal A)
\end{CD}
$$ are equivalences, and the result follows.
\end{proof}

The above results automatically give theorems about $n$-cubes for $n\geq 1$.
Recall that if $S$ is a finite set, then an {\em $S$-cube} is a functor from
the category $\mathcal PS$ of subsets of $S$, and that if $|S|$ is the
cardinality of $S$, one often uses the term $|S|$-cube.  Hence a $0$-cube is
an object, a $1$-cube is a map and a $2$-cube is a commuting square.

\begin{cor}\label{cor:strong}
  Let $\mathcal A$ be an $S$-cube of connective $\s$-algebras such that for
all $U\subseteq S$ the canonical map
$$p^U\colon\pi_0A^U\to\Lim[U\subsetneq T\subseteq S]\pi_0A^T$$
is surjective, and in addition that $p^{\emptyset}$ has nilpotent kernel.
Then the $(|S|+1)$-cube
$$K(\kata)\hp TC(\kata)$$
induced by the cyclotomic trace is homotopy cartesian after profinite
completion.
\end{cor}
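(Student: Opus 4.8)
The plan is to run exactly the argument of proposition~\ref{theo:strong}, with the single square replaced by an induction on $|S|$. Write $F$ for the profinite completion of the homotopy fibre of the cyclotomic trace. The $(|S|+1)$-cube $K(\kata)\lhp TC(\kata)$ is homotopy cartesian after profinite completion if and only if the $|S|$-cube $F(\kata)$ is homotopy cartesian, so it suffices to treat $F(\kata)$. Since all the hypotheses are conditions on path components, and since by \cite{dundas97:_relat_k} the natural map $F(A)\hp F(\pi_0A)$ is an equivalence for every connective \s-algebra $A$, I would first replace $F(\kata)$ by the $S$-cube of discrete rings $F(\pi_0\kata)$; this reduces everything to showing that $F$ carries the $S$-cube $\pi_0\kata$ to a homotopy cartesian cube. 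Finally, writing $B=\Lim[\emptyset\subsetneq T\subseteq S]\pi_0A^T$, the nilpotence hypothesis on $p^{\emptyset}$ together with McCarthy's theorem \cite{mccarthy97} shows that $F(\pi_0A^{\emptyset})\hp F(B)$ is an equivalence, so I may assume $p^{\emptyset}$ is an isomorphism, \ie\ that the initial vertex is the honest limit of the punctured cube.

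What remains is an $n$-cube version of Geisser and Hesselholt's theorem: if $\mathcal R$ is an $S$-cube of discrete rings with every $p^U$ surjective and with $R^{\emptyset}=\Lim[\emptyset\subsetneq T\subseteq S]R^T$, then $F(\mathcal R)$ is homotopy cartesian. I would prove this by induction on $|S|$, the cases $|S|\leq 2$ being precisely McCarthy's theorem and Geisser--Hesselholt's theorem \cite{geisser06:_bi_k}. For the inductive step, fix a shelling $T_1,\dots,T_m$ of the nonempty subsets of $S$ by decreasing cardinality, so that each initial segment $\{T_1,\dots,T_k\}$ is upward closed, and build the limit one vertex at a time: letting $L_k$ be the limit of the subdiagram on $T_1,\dots,T_k$, the passage from $L_k$ to $L_{k+1}$ is a pullback square whose right-hand vertical map is $p^{T_{k+1}}\colon R^{T_{k+1}}\hp\Lim[T_{k+1}\subsetneq T\subseteq S]R^T$. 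By the surjectivity hypothesis this is a Milnor square, so Geisser--Hesselholt's theorem makes $F$ of it homotopy cartesian. On the homotopy side the same gluing holds: adjoining the minimal element $T_{k+1}$ to $\{T_1,\dots,T_k\}$ presents the homotopy limit of $F$ over $T_1,\dots,T_{k+1}$ as the homotopy pullback of the homotopy limit over $T_1,\dots,T_k$ and of $F(R^{T_{k+1}})$, taken over the homotopy limit of $F(R^T)$ along $T_{k+1}\subsetneq T$.

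The comparison map from the strict to the homotopy picture is then assembled from three maps, and I would verify each is an equivalence: the first by the inner induction on $k$; the third is the identity on $F(R^{T_{k+1}})$; and the middle map $F(\Lim[T_{k+1}\subsetneq T\subseteq S]R^T)\hp\holim F(R^T)$ over $T_{k+1}\subsetneq T$ is an instance of the \emph{outer} inductive hypothesis, since $\{T:T_{k+1}\subsetneq T\subseteq S\}$ is again a punctured cube, of dimension $|S|-|T_{k+1}|<|S|$, whose vertices inherit the surjectivity hypothesis and whose initial vertex I replace by its strict limit. As homotopy pullbacks of equivalences are equivalences, $F(L_{k+1})\hp\holim$ is an equivalence, and at $k=m$ this is the assertion. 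The only substantive inputs are Geisser--Hesselholt and McCarthy, used as black boxes; the rest is formal. The genuinely delicate point — and the reason the proof cannot proceed by the naive splitting of the cube into its front and back faces — is that being homotopy cartesian is not a face-by-face condition, and the faces do not inherit the hypotheses, whose limits run over the full punctured subcubes. The function of the condition on \emph{every} $p^U$ is exactly to make each elementary square of the shelling a Milnor square, so that the strict limit computing $R^{\emptyset}$ agrees, after $F$, with the homotopy limit; reconciling these strict and homotopy limits is where all the hypotheses are consumed and is the crux of the argument.
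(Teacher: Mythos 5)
Your proof is correct, but it organizes the induction differently from the paper's. Both arguments share the same reductions (pass to the fibre $F$, use \cite{dundas97:_relat_k} to replace $\kata$ by $\pi_0\kata$, use McCarthy to trade the initial vertex for the strict limit of the punctured cube) and the same external inputs; the difference is combinatorial. The paper splits the $n$-cube along the last coordinate into its front and back faces, replaces the initial vertex of \emph{each face} by the strict limit of the corresponding punctured subcube so that both become cartesian $(n-1)$-cubes of discrete rings still satisfying the hypotheses, and then applies the inductive hypothesis three times: to these two $(n-1)$-cubes and to the square connecting their initial vertices; the gluing of the three homotopy cartesian pieces into a homotopy cartesian $n$-cube is left implicit. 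You instead filter the punctured cube by a shelling in order of decreasing cardinality, so that each stage adjoins one minimal vertex and is an elementary Milnor square to which Geisser--Hesselholt applies directly, and you reconcile strict with homotopy limits by a double induction (inner on the filtration, outer on the dimension of the cube). Your scheme is finer-grained: it isolates every single application of \cite{geisser06:_bi_k} and makes explicit the strict-versus-homotopy-limit comparisons that the paper suppresses, at the price of heavier bookkeeping and a two-level induction where the paper has one. A remark on your closing comment: the claim that one ``cannot proceed by the naive splitting of the cube into its front and back faces'' is true as stated, but the paper's proof shows that the face splitting \emph{does} work once each face's initial vertex is replaced by its strict limit --- which is exactly the same repair you carry out, only performed face by face rather than vertex by vertex.
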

\begin{proof} Note that the surjectivity condition on the cube is
  symmetric in the sense that the condition is satisfied for all
  subcubes. In particular, all maps in the cube are $0$-connected. By
  the same reasoning as in proposition \ref{theo:strong} we may
  immediately reduce to the case of discrete rings. For concreteness,
  let $S=\{1,\dots,n\}$, and assume by induction that the corollary
  has been proven for cubes of cardinality less than $n$.

Let $\mathcal A[\emptyset]$ be the cartesian $(n-1)$-cube obtained by
restricting the functor $\mathcal A$ to $\mathcal P\{1,\dots,n-1\}$ and replacing
$A^\emptyset$ with $\Lim[\emptyset\neq T\subseteq\{1,\dots,n-1\}]A^T$,
and let $\mathcal A[n]$ be the cartesian $(n-1)$-cube obtained by
restricting $\mathcal A$ to the complement of $\mathcal
P\{1,\dots,n-1\}$ and replacing $A^{\{n\}}$ with $\Lim[\{n\}\subsetneq
T\subseteq\{1,\dots,n\}]A^T$.  Then by induction, the corollary applies
to $\mathcal A[\emptyset]$, $\mathcal A[n]$ and to the square
$$
  \begin{CD}
    A^\emptyset@>>> A^{\{n\}}\\@VVV@VVV\\\CDlim[\emptyset\neq
T\subseteq\{1,\dots,n-1\}]A^T@>>>\CDlim[\{n\}\subsetneq
T\subseteq\{1,\dots,n\}]A^T
  \end{CD}
$$
\end{proof}

Notice that the conditions in the corollary are unnecessary restrictive.  If
for instance $n=3$ we see that demanding that \eg $A^{\{1\}}\to A^{\{1,2\}}$,
$A^{\{1,3\}}\to A^{\{1,2,3\}}$, $A^{\{3\}}\to
A^{\{1,3\}}\times_{A^{\{1,2,3\}}}A^{\{2,3\}}$, and
$A^\emptyset\to\Lim[\emptyset\neq T]A^T$ are surjective (and the last map has
a nilpotent kernel) is enough to conclude that $K(\mathcal A)\to TC(\mathcal
A)$ is cartesian after profinite completion.  There are many variants.

\section{Homotopy cartesian squares and $\pi_0$}
\label{sec:reduction}

Theorem \ref{thm:MainIntro} now follows immediately from proposition
\ref{theo:strong} and 
\begin{prop}
   \label{prop:square-zero}Let \kata\ be a homotopy cartesian diagram
   of connective \s-algebras
\begin{equation*}
  \begin{CD}
    A^0 @>g'>> A^1 \\
    @Vf'VV @VVfV\\
    A^2 @>>g> A^{12}
  \end{CD}
\end{equation*}
such that $\pi_0A^1 \hp
\pi_0A^{12}$ is surjective.
   Then the induced  map
   $$h\colon\pi_0A^0\hp\pi_0A^1\times_{\pi_0A^{12}}\pi_0A^2$$
   is a surjection with square zero kernel.
 \end{prop}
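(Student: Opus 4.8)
The plan is to analyze $h$ through the homotopy fiber of the right-hand vertical map and the long exact sequences it produces. Write $F=\hofib(f)$ for the homotopy fiber of $f\colon A^1\to A^{12}$. Since \kata\ is homotopy cartesian, the map of vertical homotopy fibers induced by the square is an equivalence $\hofib(f')\we F$, where $f'\colon A^0\to A^2$; let $\iota_1\colon F\to A^1$ and $\iota_0\colon \hofib(f')\simeq F\to A^0$ be the fiber inclusions, so that $g'\circ\iota_0\simeq\iota_1$. Because $\pi_0f$ is surjective, $F$ is connective. Throughout I use that $F$, being the homotopy fiber of the ring map $f'$, is an $A^0$-bimodule and that $\iota_0$ is a map of $A^0$-bimodules; likewise $\iota_1$ is $A^1$-linear.

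First I would establish surjectivity of $h$. The homotopy cartesian square yields the Mayer--Vietoris long exact sequence
$$\pi_1A^{12}\xrightarrow{\partial}\pi_0A^0\xrightarrow{(\pi_0g',\,\pi_0f')}\pi_0A^1\oplus\pi_0A^2\xrightarrow{\pi_0f-\pi_0g}\pi_0A^{12},$$
and exactness at the middle term says exactly that the image of $(\pi_0g',\pi_0f')$ is $\{(a,b):\pi_0f(a)=\pi_0g(b)\}=\pi_0A^1\times_{\pi_0A^{12}}\pi_0A^2$. As $h$ is this corestricted map, it is onto.

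Next I would identify the kernel and show it squares to zero. If $x\in\ker h$ then $\pi_0f'(x)=0$, so by exactness of the fiber sequence $F\to A^0\to A^2$ we may write $x=\pi_0\iota_0(\xi)$ for some $\xi\in\pi_0F$; moreover $\pi_0\iota_1(\xi)=\pi_0g'\bigl(\pi_0\iota_0(\xi)\bigr)=\pi_0g'(x)=0$. Thus $\ker h=\pi_0\iota_0\bigl(\ker\pi_0\iota_1\bigr)$. The structural heart of the argument is that the $A^0$-module structure on $\pi_0F$ factors through $\pi_0g'\colon\pi_0A^0\to\pi_0A^1$: this is precisely the content of the identification $\hofib(f')\we F$ \emph{as $A^0$-modules} supplied by the homotopy cartesian hypothesis. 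Granting this, for $x=\pi_0\iota_0(\xi)\in\ker h$ and any $y\in\ker h$, left $A^0$-linearity of $\iota_0$ gives
$$y\cdot x=y\cdot\pi_0\iota_0(\xi)=\pi_0\iota_0(y\cdot\xi)=\pi_0\iota_0\bigl(\pi_0g'(y)\cdot\xi\bigr)=\pi_0\iota_0(0)=0,$$
since $\pi_0g'(y)=0$. Hence $(\ker h)^2=0$.

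The step I expect to be the main obstacle is the structural claim that the $A^0$-action on $F$ coincides with the restriction along $g'$ of its $A^1$-action; everything else is bookkeeping with the long exact sequences. This compatibility is where the homotopy cartesian hypothesis does its real work, and it must be arranged at the level of module spectra---not merely homotopy groups---so that the equivalence $g'\colon\hofib(f')\to\hofib(f)$ is genuinely $A^0$-linear. The surjectivity of $\pi_0f$ enters only to guarantee that $F$ is connective (equivalently that $\pi_0f'$ is onto), which is what places us in the clean situation where $\pi_0F$ controls $\ker h$.
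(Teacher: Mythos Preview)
Your argument is correct, but your route differs substantially from the paper's. The paper does not work with bimodule structures on homotopy fibers at all. Instead, it first reduces the statement from connective \s-algebras to simplicial rings: applying the free-abelian-group functor $\tilZ$ (which preserves homotopy cartesian squares and is $1$-connected on each vertex) and then the functor $R$ produces an equivalent square of simplicial rings with the same $\pi_0$. Once in simplicial rings, the paper replaces $f$ by a fibration (so that $f$ and $f'$ are surjective), chooses representatives $u_0,v_0\in A^0_0$ for two kernel classes with $f'(u_0)=0$, and writes down by hand a $1$-simplex $s_0(g'u_0)\cdot v_1$ in $A^2\times_{A^{12}}A^1$ exhibiting $u_0\cdot v_0\sim 0$. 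Everything is an explicit manipulation of face and degeneracy maps; no module-spectrum machinery enters.

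Your approach trades that explicit computation for the structural claim that the equivalence $\hofib(f')\we\hofib(f)$ is $A^0$-linear for the restricted $A^1$-action. This is true---$(g',g)$ is a map of arrows in $A^0$-bimodules, since ring maps are bimodule maps and fibers in bimodules are computed underlying---and once granted, your chain of equalities on $\pi_0$ goes through. The payoff is that you never need the reduction to simplicial rings or any simplicial combinatorics; the cost is that you must set up fibers of ring maps as bimodules carefully, either $\infty$-categorically or via a model-categorical argument, whereas the paper's proof is entirely self-contained at the level of simplicial sets. The two arguments are really the same idea (``multiply a kernel element by something that $g'$ kills''), packaged at different levels of abstraction.
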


\begin{proof}
First we reduce the proof to the corresponding statement for
simplicial rings as found in lemma \ref{lemma:square-zero}.

  Since all $\s$-algebras and the vertical fibers are connective, we
may use $\Gamma$-spaces as our model for spectra, and monoids under
the smash product of Lydakis \cite{Lydakis} as our model for
$\s$-algebras. For details, see \cite{dundas:local-structure}
chapter II.

Let $H$ be the Eilenberg-Mac Lane construction sending a simplicial
ring to a connective $\s$-algebra.  The functor $\tilZ$ which sends a
pointed set $X$ to the free abelian group $\tilZ X=\Z[X]/\Z[*]$
extends to an endofunctor on the category of connective $\s$-algebras.
Furthermore, there is a functor $R$ from connective $\s$-algebras to
simplicial rings and a natural chain of stable equivalences connecting
$\tilZ$ and $HR$. Proves may be found in the published version
\cite[proposition 3.5]{dundas97:_relat_k} or more directly
applicable in \cite[corollary II.2.2.5]{dundas:local-structure}.

Now, the functor $\tilZ$ preserves homotopy cartesian diagrams of
$\Gamma$-spaces, and so if $\mathcal A$ is a homotopy cartesian
diagram of connective $\s$-algebras, then $\tilZ\mathcal A$ is a homotopy
cartesian
diagram of connective $\s$-algebras which is equivalent to $H$ of a
homotopy cartesian diagram $R\mathcal A$ of simplicial rings.

Furthermore, the obvious map $\mathcal A\to\tilZ\mathcal A$ is
$1$-connected (see \eg \cite[proposition 3.3]{dundas97:_relat_k}), and
so we get isomorphisms  
$$
\pi_0\mathcal A\cong
\pi_0\tilZ \mathcal A\cong \pi_0HR\mathcal A\cong \pi_0R\mathcal A
$$
of squares of rings.
\end{proof}

\begin{lemma}
   \label{lemma:square-zero}Let \kata\ be a homotopy cartesian diagram
   of simplicial rings
\begin{equation*}
  \begin{CD}
    A^0 @>g'>> A^1 \\
    @Vf'VV @VVfV\\
    A^2 @>>g> A^{12}
  \end{CD}
\end{equation*}
such that $\pi_0A^1 \hp
\pi_0A^{12}$ is surjective.
   Then the induced  map
   $$h\colon\pi_0A^0\lhp\pi_0A^1\times_{\pi_0A^{12}}\pi_0A^2$$
   is a surjection with square zero kernel.
 \end{lemma}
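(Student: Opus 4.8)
The plan is to choose a strict model in which the homotopy pullback becomes an ordinary pullback, and then to read off both assertions from the resulting short exact sequence of simplicial ideals. Since $\pi_0A^1\hp\pi_0A^{12}$ is surjective, I would first replace $f\colon A^1\to A^{12}$ by a degreewise surjective map of simplicial rings (a fibration of the underlying simplicial abelian groups, which for simplicial groups is degreewise surjective precisely because $\pi_0f$ is onto) without changing the homotopy type of the square. Because the square is homotopy cartesian, the strict pullback $A^1\times_{A^{12}}A^2$ then models $A^0$, and $f'\colon A^0\to A^2$ is again surjective with kernel the simplicial two-sided ideal $J=\ker f=\ker f'$. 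All the hypotheses are used up in producing this model; the remaining work is computational with Moore complexes.

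Surjectivity of $h$ is then immediate: given a compatible pair in $\pi_0A^1\times_{\pi_0A^{12}}\pi_0A^2$, I lift it to $(a_1,a_2)\in A^1_0\times A^2_0$ with $[f(a_1)]=[g(a_2)]$, write the difference $f(a_1)-g(a_2)$ as a Moore boundary in $A^{12}$, lift that boundary along the surjection $f$, and correct $a_1$ by it so that $f(a_1)=g(a_2)$ on the nose, producing a genuine element of the pullback. For the kernel I would show $\ker h=\operatorname{im}\bigl(\pi_1A^{12}\xrightarrow{\partial}\pi_0J\to\pi_0A^0\bigr)$: an element of $\ker h$ can be represented, after correcting by a boundary coming from $A^2$ (again using surjectivity), by some $(a_1,0)$ with $a_1\in J_0$ whose class dies in $\pi_0A^1$, and by the long exact sequence of $J\to A^1\to A^{12}$ such classes are exactly the image of the connecting map $\partial\colon\pi_1A^{12}\to\pi_0J$.

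It remains to prove that this image is square zero, which is the heart of the matter. I would represent $\alpha,\beta\in\pi_1A^{12}$ by Moore cycles $w,v\in N_1A^{12}$ and lift them along $f$ to $\tilde w,\tilde v\in N_1A^1$, so that $\partial\alpha=[d_0\tilde w]$ and $\partial\beta=[d_0\tilde v]$ with $d_0\tilde w,d_0\tilde v\in J_0$. The key observation is that the degreewise product $wv$ is canonically null in $A^{12}$: the $2$-simplex $u=s_0w\,(s_0v-s_1v)\in N_2A^{12}$ satisfies $d_0u=wv$ (while $d_1u=d_2u=0$). I would lift $u$ along the surjection $f$ to $\tilde u\in N_2A^1$ and then check, using the simplicial identities $d_1d_0=d_0d_2$ and $d_0d_0=d_0d_1$, that $p=\tilde w\tilde v-d_0\tilde u$ lies in $N_1J$ and has $d_0p=d_0\tilde w\cdot d_0\tilde v$. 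Thus a representative of $\partial\alpha\cdot\partial\beta$ is a Moore boundary inside $J$, so the product already vanishes in $\pi_0J$, hence in $\pi_0A^0$; bilinearity then yields $(\ker h)^2=0$.

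The main obstacle is precisely this last step. The naive attempt to witness $d_0\tilde w\cdot d_0\tilde v$ as a boundary via $\tilde w\tilde v$ alone fails, because $\tilde w\tilde v$ need not lie in $J$: its image in $A^{12}$ is the degreewise product $wv$, which is generally nonzero. The remedy is to subtract the lifted nullhomotopy $d_0\tilde u$, which pushes the correction back inside $J$ while preserving the degree-zero boundary. Finding the explicit contracting simplex $u$ and verifying that it can be lifted within the normalized complex — which is exactly where the degreewise surjectivity of $f$ is needed, together with a little care over the Moore normalization of the lifts — is the only delicate point.
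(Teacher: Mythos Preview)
Your argument is correct, and the overall architecture---replace $f$ by a surjection, model $A^0$ as the strict pullback, normalize representatives so that the $A^2$-component vanishes, and then produce a $1$-simplex in the fiber $J$ bounding the product---matches the paper's. The difference is only in how the bounding $1$-simplex is produced, and here the paper is considerably more direct.

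You identify $\ker h$ with the image of $\pi_1A^{12}\xrightarrow{\partial}\pi_0J\to\pi_0A^0$, then build a $2$-simplex $u=s_0w\,(s_0v-s_1v)\in N_2A^{12}$ with $d_0u=wv$, lift it to $\tilde u\in N_2A^1$, and set $p=\tilde w\tilde v-d_0\tilde u$. This works, but the detour through degree $2$ is unnecessary. In your own notation the paper simply takes
\[
p \;=\; s_0(d_0\tilde w)\cdot \tilde v \;\in\; A^1_1,
\]
and observes that $f(p)=s_0(d_0w)\cdot v=0$ because $w$ is a Moore cycle, so $p$ already lies in $N_1J$; moreover $d_1p=0$ and $d_0p=d_0\tilde w\cdot d_0\tilde v$. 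In the paper's language (after arranging $f'(u_0)=0$) this is the $1$-simplex $(0,\,s_0(g'u_0)\cdot v_1)$ in $A^2\times_{A^{12}}A^1$. So the ``main obstacle'' you describe---that $\tilde w\tilde v$ need not lie in $J$---is bypassed not by correcting with a lifted $2$-simplex but by replacing $\tilde w$ with the degenerate $s_0(d_0\tilde w)$, whose image in $A^{12}$ is already zero. What your route buys is the explicit identification $\ker h=\operatorname{im}\partial$ and the slightly stronger statement that the product already vanishes in $\pi_0J$; what the paper's route buys is a one-line endgame with no degree-$2$ lifting to manage.
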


\begin{proof}
Chasing long exact sequences of homotopy groups yields that $h$ is
surjective.

In proving that the kernel of
$\pi_0A^0\hp\pi_0A^1\times_{\pi_0A^{12}}\pi_0A^2$ is square zero, the
idea is to pick two elements in $\ker(h)\subseteq\pi_0A^0$ and show,
by making an appropriate choice of representatives, that the product
of the representatives is homotopic to 0 in $A^0$. This implies that
the kernel is square zero. The proof is an exercise in manipulating
simplicial homotopies and we refer to \cite{may92:_simpl} for
details. For homotopic simplices $x$ and $y$ in a simplicial abelian
group $G$, we write $x\sim y$. If $x$ and $y$ happen to be
zero-simplices, then being homotopic means that there is a 1-simplex
$z$ with $d_0z=x$ and $d_1z=y$.

We may assume that $A^1\to A^{12}$ is a fibration. According to
\cite[p. II.3.10]{quillen67:_homot}, maps of simplicial
groups are surjective if and only if they are both fibrations and
$0$-connected, and so the assumption that $\pi_0A^1 \hp\pi_0A^{12}$ is
surjective implies that $A^1\to A^{12}$ is a surjection.

Let
$[u_0]\in\ker(h)$
be
represented by $u_0\in A^0_0$.
Then $h([u_0])=([f'u_0],[g'u_0])=0$ in the
pullback, and
$f'(u_0)\sim 0$ and $g'(u_0)\sim 0$ as 0-simplices in $A^2$ and
$A^1$ respectively. The homotopies are given by 1-simplices $u_2\in
A^2_1$ with $d_0u_2=f'u_0$ and $d_1u_2=0$ and $u_1\in A^1_1$ with
$d_0u_1=g'u_0$ and $d_1u_1=0$. These simplices correspond to based
maps $u_2\colon I\hp A^2$ and $u_1\colon I\hp A^1$ respectively
and by abuse of notation we name the maps after their corresponding
simplices. As $u_0$ is a 0-simplex of $A^0$ it corresponds to a
based map $u_0\colon S^0\hp A^0$. All this fits into the following
diagram of pointed simplicial sets
\begin{equation*}
  \label{eq:rep-kernel-naiv}
     \begin{CD}
       I @<<< S^0 @>>> I \\
       @Vu_2VV  @VVu_0V    @VVu_1V \\
       A^2 @<f'<< A^0 @>g'>> A^1
     \end{CD}
\end{equation*}
and it represents an element in $\ker(h)$.

Since $f'$ is a fibration of simplicial rings, we may lift
$u_2\colon I \hp A^2$ to a based map $u\colon I \hp A^0$.  It will
usually not be
compatible with the map $u_0\colon S^0\hp A^0$, but we do have that
$[u_0]=[u_0-d_0u]$ and $f'(u_0-d_0u)=0$, showing that it is enough to
consider the situation where $u_2=0$, that is, diagrams of the form
   \begin{equation}
     \label{eq:rep-kernel-smart}
     \begin{CD}
       * @<<< S^0 @>>> I \\
       @VVV  @VVu_0V    @VVu_1V \\
       A^2 @<f'<< A^0  @>g'>> A^1
     \end{CD}
   \end{equation}
   This diagram induces a (based) map $u_{12}\colon S^1\hp A^{12}$,
   and is our ``appropriate choice''.

   Let $[u_0]$ and $[v_0]$ be any two elements in $\ker(h)$ and pick
   representatives for them as in diagram (\ref{eq:rep-kernel-smart}).

   Consider the map $s_0(g'u_0)\cdot v_1\colon I \hp A^1$. The dot
   denotes multiplication in $A^1$.  The map is a simplicial homotopy
   from $g'(u_0)\cdot g'(v_0)$ to 0 since \[ d_1(s_0(g'u_0)\cdot
   v_1)=d_1s_0(g'u_0)\cdot d_1(v_1) = g'(u_0)\cdot 0 = 0.  \] and \[
   d_0(s_0(g'u_0)\cdot v_1) = d_0s_0(g'u_0)\cdot d_0(v_1) =
   g'(u_0)\cdot g'(v_0).  \] Because we picked a representative
   $u_0\in \ker(f')$ we get \[ f(s_0(g'u_0)\cdot v_1) =
   s_0(fg'(u_0))\cdot f(v_1) = s_0(gf'(u_0))\cdot f(v_1) = 0\cdot
   f(v_1) = 0.  \] The equations above show that we get a well-defined
   and based map \[(0,s_0(g'u_0)\cdot v_1)\colon I \hp
   A^2\times_{A^{12}}A^1,\] determining a simplicial homotopy from
   $(0,g'u_0\cdot g'v_0)$ to 0. Under the weak equivalence $A^0\simeq
   A^2\times_{A^{12}}A^1$, the element $(0,g'u_0\cdot g'v_0)$
   corresponds to the product $u_0\cdot v_0$, showing that it is
   homotopic to 0.
\end{proof}

\begin{ex}
\label{ex:kerh_non_triv}
An example of the situation in lemma \ref{lemma:square-zero} where the
kernel of $h$ is non-trivial may be helpful.
Consider the diagram of simplicial rings
\begin{equation*}
  \begin{CD}
    \Z[\varepsilon]/\varepsilon^2 @>>> \Z \\
    @VVV @VVV\\
    \Z @>>> \Z[S^1]
  \end{CD}
\end{equation*}
in which both maps to $\Z$ are projections and both maps from $\Z$ are
inclusion into the simplicial ring $\Z[S^1]$. All rings in the diagam
but $\Z[S^1]$ are discrete. The diagram is homotopy cartesian which is
easily checked as the zeroth homotopy groups are the only non-trivial
homotopy groups of the fibers. In this case
$$
h\colon \Z[\varepsilon]/\varepsilon^2 \lhp
\pi_0\Z\times_{\pi_0\Z[S^1]}\pi_0\Z \cong \Z 
$$
is the projection with square zero $\ker h =
\Z\langle\varepsilon\rangle$, the infinite cyclic group generated by
$\varepsilon$.
\end{ex}

The connectivity hypothesis on $f^1$ is annoying in that it makes it
difficult to state minimal hypotheses for good multirelative
versions.
As a crude corollary of the main result one has the
following;

\begin{cor}
\label{cor:main}
Let $\mathcal A$ be a homotopy cartesian $S$-cube of connective
$\s$-algebras such that for all $U\subseteq S$ the canonical map
$$
p^U\colon A^U\to\holim[U\subsetneq T\subseteq S]A^T
$$
is $0$-connected.  Then the $(|S|+1)$-cube
$$
K(\kata)\hp TC(\kata)
$$
induced by the cyclotomic trace is homotopy cartesian after profinite
completion.
\end{cor}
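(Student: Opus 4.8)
The plan is to prove the corollary by induction on $|S|$, staying throughout with the functor $F$, the profinite completion of the homotopy fibre of the cyclotomic trace. As in the proof of Proposition \ref{theo:strong}, the $(|S|+1)$-cube $K(\kata)\to TC(\kata)$ is homotopy cartesian after profinite completion exactly when the $|S|$-cube $F(\kata)$ is homotopy cartesian, so this is the statement I would establish. The cases $|S|\le 1$ are trivial, and $|S|=2$ is Theorem \ref{thm:MainIntro}: the hypotheses guarantee in particular that $p^{\{1\}}$ is $0$-connected.

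For the inductive step, write $S=\{1,\dots,n\}$ and $S'=\{1,\dots,n-1\}$, and split the punctured cube according to whether $n\in T$, exactly as in Corollary \ref{cor:strong}. This exhibits $\holim[\emptyset\neq T\subseteq S]A^T$ as the homotopy pullback of
\[
\holim[\emptyset\neq T\subseteq S']A^T\ \lhp\ \holim[\{n\}\subsetneq T\subseteq S]A^T\ \lvp\ A^{\{n\}},
\]
and, since \kata\ is homotopy cartesian, identifies $A^\emptyset$ with this homotopy pullback. Hence the square
\[
\begin{CD}
A^\emptyset @>>> A^{\{n\}}\\
@VVV @VVV\\
\holim[\emptyset\neq T\subseteq S']A^T @>>> \holim[\{n\}\subsetneq T\subseteq S]A^T
\end{CD}
\]
is homotopy cartesian, and its right-hand vertical map is $p^{\{n\}}$, which is $0$-connected; so $F$ of this square is homotopy cartesian by Theorem \ref{thm:MainIntro}. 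Let $\kata[\emptyset]$ and $\kata[n]$ be the $(n-1)$-cubes of Corollary \ref{cor:strong}, obtained by restricting \kata\ to $\mathcal P S'$, respectively to the subsets containing $n$, and replacing the initial vertex by the appropriate homotopy limit. Both are homotopy cartesian by construction.

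I would then check that $\kata[\emptyset]$ and $\kata[n]$ again satisfy the hypotheses of the corollary, so that by induction $F(\kata[\emptyset])$ and $F(\kata[n])$ are homotopy cartesian. Granting this, the result follows formally: applying the same splitting to the cube $F(\kata)$ shows that $F(\kata)$ is homotopy cartesian if and only if the square with corners $F(A^\emptyset)$, $F(A^{\{n\}})$, $\holim[\emptyset\neq T\subseteq S']F(A^T)$ and $\holim[\{n\}\subsetneq T\subseteq S]F(A^T)$ is homotopy cartesian. But ``$F(\kata[\emptyset])$ and $F(\kata[n])$ are homotopy cartesian'' says precisely that $F$ commutes with the two homotopy limits occurring as the lower corners; so this square coincides with the homotopy cartesian square obtained by applying $F$ to the displayed square, and we are done.

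The step that is not formal, and the one I expect to be the main obstacle, is the inheritance of the hypotheses by $\kata[\emptyset]$; for $\kata[n]$ the initial maps literally coincide with the maps $p^{W\cup\{n\}}$ of \kata, so there is nothing to prove. For $\kata[\emptyset]$ one must show that for $\emptyset\neq W\subseteq S'$ the map $A^W\to\holim[W\subsetneq T\subseteq S']A^T$ is $0$-connected. This map is $p^W$ followed by the restriction $\rho\colon\holim[W\subsetneq T\subseteq S]A^T\to\holim[W\subsetneq T\subseteq S']A^T$ onto the subdiagram of sets not containing $n$. Splitting the source of $\rho$ on whether $n\in T$ displays $\rho$ as the projection of a homotopy pullback onto one factor, the opposite leg of which is $p^{W\cup\{n\}}$. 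Since the projection of a homotopy pullback onto a factor is $\pi_0$-surjective as soon as the opposite leg is, and $p^{W\cup\{n\}}$ is $0$-connected, the map $\rho$ is $0$-connected, and hence so is the composite. Turning this chain of homotopy-pullback surjectivity statements into a clean induction is the real content of the argument.
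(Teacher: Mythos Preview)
Your argument is correct and follows exactly the paper's route: the paper's proof of Corollary~\ref{cor:main} simply says to rerun the induction from Corollary~\ref{cor:strong} with homotopy limits in place of limits, and you have spelled this out in full, including the inheritance of the $0$-connectivity hypothesis by $\kata[\emptyset]$, which the paper leaves to the reader. One small point you (and the paper) pass over: in order to apply Theorem~\ref{thm:MainIntro} to the displayed square and the inductive hypothesis to $\kata[\emptyset]$ and $\kata[n]$, you need the homotopy limits occurring as vertices to be \emph{connective} $\s$-algebras; this is not automatic for punctured-cube holims, but it follows from the very same pullback decomposition you use for inheritance, since the homotopy fibre of $p^{W\cup\{n\}}$ has vanishing $\pi_{-1}$ and hence pulling back along it preserves connectivity---so it folds into the same induction.
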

Note that $p^\emptyset$ is an equivalence (and thus 0-connected)
because $\mathcal A$ is assumed to be homotopy cartesian. When
$U=S$, the homotopy limit is taken over the empty set and
$p^S\colon A^S\to *$ is clearly 0-connected.

 \begin{proof}[Proof of corollary \ref{cor:main}]
   The proof of this corollary is exactly as the proof of corollary
\ref{cor:strong}, except that you remove $\pi_0$ (and replace the limits by
homotopy limits or replace the cube with a fiber cube so that limits and
homotopy limits agree up to stable equivalence).
 \end{proof}

\begin{remark}\label{rem:3cube}
Corollary \ref{cor:main} is not optimal. For instance if $n=3$, it
would also suffice that the maps $A^\emptyset\to A^{\{3\}}$,
$A^{\{1\}}\to A^{\{1,2\}}$ and $A^{\{2,3\}}\to A^{\{1,2,3\}}$ were
$0$-connected (in addition to homotopy cartesianness of the
cube). Note that this condition is actually not contained in the one
given in the corollary, but is one of the many variants possible. We
spell out this example.

Let $F$ be the profinite completion of the fiber of the cyclotomic trace.  We
may assume all maps are fibrations. Then $F$ applied to the squares
$$
\begin{CD}
  A^{\{1,3\}}\times_{A^{\{1,2,3\}}}A^{\{2,3\}}@>>>A^{\{2,3\}}\\
@VVV@VVV\\
A^{\{1,3\}}@>>>A^{\{1,2,3\}}
\end{CD},\qquad
\begin{CD}
  A^{\{1\}}\times_{A^{\{1,2\}}}A^{\{2\}}@>>>A^{\{2\}}\\
@VVV@VVV\\
A^{\{1\}}@>>>A^{\{1,2\}}
\end{CD}
$$
give homotopy cartesian squares.

Consider the square
$$
\begin{CD}
  A^\emptyset@>>>A^{\{3\}}\\
@VVV@VVV\\
A^{\{1\}}\times_{A^{\{1,2\}}}A^{\{2\}}@>>>A^{\{1,3\}}\times_{A^{\{1,2,3\}}}A^
{\{2,3\}}
\end{CD}.
$$
This square is homotopy cartesian since the entire cube is, and by assumption
the top map is $0$-connected.  Since everything is connective it follows that
the bottom map is $0$-connected too, and so the main theorem applies again to
show that $F$ applied to this square is homotopy cartesian.  Collecting the
pieces we get that $F$ applied to the cube is homotopy cartesian.

Theorem \ref{thm:MainIntro} implies that for $n=2$, we only need
$f^1$ (or $f^2$) to be 0-connected, but the condition in
\ref{cor:main} requires both to be 0-connected, this shows again that
the statement of \ref{cor:main} is not optimal.
\end{remark}

\bibliographystyle{plain}

\end{document}